\documentclass{amsart}
\usepackage{helvet}
\usepackage{setspace}
\usepackage{enumitem}
\usepackage[all, cmtip]{xy}
\usepackage{amssymb}
\usepackage{graphicx}
\usepackage{ragged2e}
\usepackage[
    style=numeric,
    sorting=nyt,  
    maxnames=5,
    minnames=1,
    giveninits=true,
    doi=true,
    url=true,
    backend=biber
]{biblatex}
\DefineBibliographyExtras{english}{

}

\DeclareNameAlias{sortname}{family-given}
\DeclareNameAlias{default}{family-given}

\AtBeginBibliography{
  
}
\DeclareFieldFormat[article]{title}{#1}
\DeclareFieldFormat[book]{title}{\mkbibemph{#1}}
\DeclareFieldFormat{journaltitle}{#1}

\usepackage{xcolor} 
\definecolor{darkblue}{RGB}{0,0,139} 
\usepackage{etoolbox} 
\usepackage[margin=1in]{geometry} 
\usepackage{tikz}
\usepackage{amsmath,amsthm,amssymb}
\usepackage[colorlinks=true,  
            linkcolor=darkblue,  
            citecolor=darkblue, 
            urlcolor=darkblue 
           ]{hyperref}
\usepackage{abstract}

\usepackage{titlesec}
\titleformat{\section}
  {\normalfont\Large\bfseries\color{darkblue}}
  {\thesection}{1em}{}
\titleformat{\subsection}
  {\normalfont\large\bfseries\color{darkblue}}
  {\thesubsection}{1em}{}
\titleformat{\subsubsection}
  {\normalfont\normalsize\bfseries\color{darkblue}}
  {\thesubsubsection}{1em}{}

\newcommand{\N}{\mathbb{N}}

\newtheoremstyle{plainnormal}  
  {3pt}      
  {3pt}      
  {\normalfont} 
  {}         
  {\color{darkblue}\bfseries} 
  {.}        
  {0.5em}    
  {}         

\theoremstyle{plainnormal}
\newtheorem{theorem}{Theorem}[section]
\newtheorem{lemma}[theorem]{Lemma}
\newtheorem{corollary}[theorem]{Corollary}
\newtheorem{proposition}[theorem]{Proposition}
\newtheorem{definition}[theorem]{Definition}
\newtheorem{example}[theorem]{Example}

\newtheorem{remark}[theorem]{Remark}

\makeatletter
\renewenvironment{proof}[1][\proofname]{
  \par\pushQED{\qed}\normalfont
  \topsep6pt \partopsep0pt
  \trivlist
  \item[\hskip\labelsep\color{darkblue}\itshape #1.]
}{
  \popQED\endtrivlist\@endpefalse
}
\makeatother
\usepackage{cleveref}

\begin{document}
\onehalfspacing
\title{Weakly Sigma-cotorsion rings}
\author{Manuel Cortés-Izurdiaga \and Sergio Estrada \and José Manuel Fresneda}
\date{}

\thanks{M.C.I. and J.M.F. were partly supported by grant PID2024-158993NB-100. S.E. and J.M.F. were partly supported by grant PID2024-155576NB-I00. Both grants are funded by MICIU/AEI/10.13039/501100011033 /FEDER, UE.}

\subjclass[2020]{16E10, 16D90, 16L30}
\keywords{left $n$-perfect ring, left weakly $\Sigma$-cotorsion ring, $n$-$\Sigma$-cotorsion module, direct sums of injective modules}

\maketitle
\begin{center}
\begin{minipage}{0.8\textwidth}
\textbf{\textcolor{darkblue}{Abstract.}} 
\justifying
We study the class of rings $R$ for which every direct sum of injective $R$-modules is cotorsion. We call them weakly $\Sigma$-cotorsion rings. The defining property might be seen as the dual of Chase's characterization of coherence in terms of the flatness of every direct product of projective $R$-modules. More generally, we study rings over which direct sums of injective modules have finite cotorsion dimension and call them weakly $n$-$\Sigma$-cotorsion rings, as well as rings over which direct sums of cotorsion modules have finite cotorsion dimension (called $n$-$\Sigma$-cotorsion rings). In the process, we obtain new characterizations of $n$-perfect rings and extend previous results by Guil Asensio and Herzog, and by Šaroch and Šťovíček.

\end{minipage}
\end{center}

\section{Introduction}

Coherent and perfect rings were introduced in 1960 in the seminal works of Chase \cite{Chase1960} and Bass \cite{Bass1960finitistic}, respectively. 
With respect to coherence, let $R$ be any ring. As a consequence of Chase's Theorem \cite[Theorem 2.1]{Chase1960}, right coherent rings can be defined by the property that the direct product of any family of projective left $R$-modules is flat. In fact, it is enough to test this for direct product of any family of copies of $R$. In the language of model theory, right coherent rings can be characterized with the property that the definable closure of the class $\mathrm{Proj}$ of projective left $R$-modules is contained in (and in fact coincides with) the class $\mathrm{Flat}$ of all flat left $R$-modules.

In turn, left perfect rings are characterized with the property that the classes of $\mathrm{Flat}$ and $\mathrm{Proj}$ agree. Now, flat left $R$-modules are the left part of the so-called flat-cotorsion pair $(\mathrm{Flat}, \mathrm{Cot})$. Here, $\mathrm{Cot}$ denotes the class of cotorsion left $R$-modules, that is, left $R$-modules $C$, such that $\mathrm{Ext}^{1}_R(F,C) = 0$, for any flat left $R$-module $F$. The flat-cotorsion pair was a crucial tool in the proof of the Flat Cover Conjecture (see Bican, El Bashir and Enochs \cite{BicanElBashirEnochs2001}). With this terminology, it is well known that a ring $R$ is left perfect if and only if $\mathrm{Cot}=R\mathrm{\text{-}Mod}$ (the class of all left $R$-modules). In  \cite[Theorem~19]{GuilAsensioHerzog2005}, Guil Asensio and Herzog proved that if a ring $R$ is such that the class $\mathrm{Cot}$ is closed under arbitrary direct sums, then $R$ must be left perfect, thus providing a new characterization of left perfect rings. 

The aim of this paper is to study closure properties under direct sums of the classes $\mathrm{Inj}$ of injective left $R$-modules and $\mathrm{Cot}$, with a twofold motivation. On the one hand, our results specialize to yield a statement that is dual in nature to Chase's original equivalent conditions for coherence mentioned above. On the other hand, we extend the characterization of left perfect rings due to Guil Asensio and Herzog to the broader class of so-called \emph{left $n$-perfect rings}, that is, rings for which every flat left $R$-module has projective dimension at most $n$. We note that this property is also referred in the literature as $\mathrm{splf}(R)\leq n$, where $\mathrm{splf}(R)$ denotes the supremum of the projective lengths of flat left $R$-modules. 

The second of these questions is addressed in Section \ref{section:When direct sums of cotorsion have finite cotorsion dimension}, in  Theorem \ref{theorem:Generalization_Pedro_Ivo}. We say that a left $R$-module $M$ is \emph{$n$-$\Sigma$-cotorsion} if, for every index set $I$, the direct sum $M^{(I)}$ has cotorsion dimension at most $n$. Furthermore, a ring $R$ is called a \emph{left $n$-$\Sigma$-cotorsion ring} if $R$ is $n$-$\Sigma$-cotorsion as a left $R$-module. In the particular case $n = 0$, we recover the classical notions of $\Sigma$-cotorsion modules and left $\Sigma$-cotorsion rings (\cite{GuilAsensioHerzog2005}).

\medskip\par\noindent
\textbf{Characterizing left $n$-perfect rings.}
Let $n \geq 0$ be an integer. For any ring $R$, the following statements are equivalent:
\begin{enumerate}[label=(\arabic*)]
\item $R$ is left $n$-perfect.
\item Every cotorsion left $R$-module is $n$-$\Sigma$-cotorsion.
\item The cotorsion envelope $C(R)$ of $R$ as a left $R$-module is $n$-$\Sigma$-cotorsion.
\item $R$ is left $n$-$\Sigma$-cotorsion.
\item The pure-injective envelope $PE(R)$ of $R$ as a left $R$-module is $n$-$\Sigma$-cotorsion.
\end{enumerate}
A key ingredient in the proof of this theorem is a generalization of a result due to Šaroch and Šťovíček \cite[Theorem~3.3]{sarich2020singular}; see Theorem~\ref{theorem:Generalization_S-S}.

Now, if we relax condition (2) above by requiring only that every \emph{injective} left $R$-module is $n$-$\Sigma$-cotorsion (equivalently, that every direct sum of left injective $R$-modules has finite cotorsion dimension at most $n$), we obtain the following result (see Theorem \ref{theorem:Caracterization_sums_of_injectives_are_in_Cot_n}):

\medskip\par\noindent
\textbf{Characterizing rings for which direct sums of injective modules have finite cotorsion dimension.}
    Let $n\geq0$ be an integer. Then, the following assertions are equivalent over any ring $R$:
    \begin{enumerate}[label=(\arabic*)]
        \item Every injective left $R$-module is $n$-$\Sigma$-cotorsion.
        \item The left $R$-module $R^{+}$ is $n$-$\Sigma$-cotorsion, where $R^{+}$ is the character module of $R_R$.
        \item Any left $R$-module in the definable closure of the injective left $R$-modules has finite cotorsion dimension at most $n$.
        \item Every $\mathrm{FP}$-injective $R$-module has finite cotorsion dimension at most $n$.
    \end{enumerate}

A ring satisfying any of the equivalent conditions above will be called \emph{left weakly $n$-$\Sigma$-cotorsion}. Such rings can be viewed as the injective-dual counterparts of the class of rings $R$ for which every direct product of projective left $R$-modules has finite flat dimension. The latter were studied by Cortés-Izurdiaga in \cite{cortes2016products} under the name of \emph{right weak $n$-coherent rings}.

The case $n=0$ (Corollary \ref{corollary:Caracterization_sums_of_injectives_are_cotorsion}) is of special interest, as it provides a natural analogue to the usual characterizations of right coherent rings described above. In this case, we will simply say that $R$ is \emph{left weakly $\Sigma$-cotorsion}.

\medskip\par\noindent
\textbf{Chase's dual characterization for direct sums of injectives.}
The following properties of a ring $R$ are equivalent:
\begin{enumerate}
\item $R$ is left weakly $\Sigma$-cotorsion, that is, any direct sum of injective left $R$-modules is cotorsion.
\item The left $R$-module $R^{+}$ is $\Sigma$-cotorsion, where $R^{+}$ is the character module of $R_R$..
\item The definable closure of the injective left $R$-modules is contained in $\mathrm{Cot}$.
\item Every $\mathrm{FP}$-injective $R$-module is cotorsion.
\end{enumerate}

In addition, in Proposition \ref{proposition:Consequences_of_sum_of_injectives_is_cotorsion}, we collect several module-theoretic properties of left weakly $\Sigma$-cotorsion rings.

We would like to stress that left weakly $\Sigma$-cotorsion rings have already appeared implicitly in the literature. In particular, Estrada, Pérez and Zhu \cite[Lemma~5.1]{EstradaPerezZhu2020}, working in the context of relative homological algebra, showed that, using the present terminology, if $\mathrm{Flat}$ forms the left-hand side of a balanced pair (see \cite{EstradaPerezZhu2020} for the terminology), then a ring $R$ is left weakly $\Sigma$-cotorsion if and only if it is left perfect (equivalently, $R$ is left $\Sigma$-cotorsion). We extend this result in Corollary \ref{corollary:Generalization_lemma_5.1(1)} for left weakly $n$-$\Sigma$-cotorsion rings.

The paper is completed by providing an abundant collection of families of examples and non-examples of the classes of rings introduced. Of course, left Noetherian rings and left perfect rings are examples of left weakly $\Sigma$-cotorsion rings. However, there are many examples of left weakly $\Sigma$-cotorsion rings that are neither left Noetherian nor left perfect; we give specific examples in the setting of formal triangular matrix rings and path rings (see Example~\ref{examples:weakly_Sigma-cotorsion}(2) and Example~\ref{examples_with_matrix_rings}(2)). Note that the existence of such non-noetherian non-perfect left weakly $\Sigma$-cotorsion ring allows one to extend the lack of flat balance in \cite[Theorem 5.2]{EstradaPerezZhu2020} and \cite[Theorem 4.1]{Enochs2015Balance} beyond the realm of noetherian rings.

Moreover, every left $n$-perfect ring is left weakly $n$-$\Sigma$-cotorsion, and every left weakly $r$-$\Sigma$-cotorsion ring is left weakly $n$-$\Sigma$-cotorsion, for all $0\leq r\leq n$. Nevertheless, among others, a classical example of Kaplansky \cite{Kaplansky1958_Hereditary} (see Example \ref{examples:weakly_Sigma-cotorsion}(3)) shows that there exist left weakly $n$-$\Sigma$-cotorsion rings with $n>0$ that are not left weakly $\Sigma$-cotorsion.

Finally, using Nagata's construction (\cite[Appendix A1, Example 1]{Nagata1962_LocalRings}), we exhibit in Example \ref{example:Nagata}(1) a left weakly $\Sigma$-cotorsion ring that is not left $n$-perfect for any $n\geq 0$. This shows that the class of left $n$-perfect rings is strictly contained in the class of left weakly $n$-$\Sigma$-cotorsion rings.


\section{Preliminaries}
Throughout this paper, $R$ denotes a (not necessarily commutative) ring with identity. Unless stated otherwise, all modules are assumed to be left $R$-modules. Whenever right $R$-modules are considered, this will be indicated explicitly. We denote by $\mathrm{Flat}$, $\mathrm{Inj}$, and $\mathrm{Cot}$ the classes of flat,  injective, and cotorsion $R$-modules, respectively. For any integer $n \geq 0$, we write $\mathrm{Cot}_n$ for the class of $R$-modules of cotorsion dimension at most $n$, noting that $\mathrm{Cot}_0 =\mathrm{Cot}$. Recall that the \emph{character module} of a $R$-module $M$ is the right $R$-module $M^+ := \mathrm{Hom}_{\mathbb{Z}}(M, \mathbb{Q}/\mathbb{Z})$, and that the definition for right $R$-modules is analogous. 

We now recall some standard notions from homological algebra. 
Given a class of $R$-modules $\mathcal{C}$, we denote by $\mathcal{C}^{\perp}$ its right orthogonal class, that is, the class of $R$-modules $X$ such that $\mathrm{Ext}^1_R(C,X) = 0$ for all $C \in \mathcal{C}$. 
The left orthogonal class is defined dually. 
A pair of classes $(\mathcal{C},\mathcal{L})$ is called a \emph{cotorsion pair} if $\mathcal{C}^{\perp} = \mathcal{L}$ and $^{\perp}\mathcal{L} = \mathcal{C}$. 
Such a cotorsion pair is \emph{complete} if, for every $R$-module $M$, there exist exact sequences
\[
0 \longrightarrow L \longrightarrow C \longrightarrow M \longrightarrow 0 \quad \text{and} \quad
0 \longrightarrow M \longrightarrow L' \longrightarrow C' \longrightarrow 0,
\]
with $C,C' \in \mathcal{C}$ and $L,L' \in \mathcal{L}$, and it is \emph{hereditary} if $\mathrm{Ext}^i_R(C,L) = 0$ for all $i \geq 1$, $C \in \mathcal{C}$, and $L \in \mathcal{L}$.

We now turn to notions related to definable classes of modules. 
A class of $R$-modules $\mathcal{D}$ is called \emph{definable} if it is closed under direct products, direct limits, and pure submodules. 
For a class $\mathcal{C}$ of $R$-modules, we denote by $\mathrm{Cogen}(\mathcal{C})$ the class of modules cogenerated by $\mathcal{C}$, that is, the class of all submodules of products of modules from $\mathcal{C}$. Analogously, $\mathrm{Cogen}_*(\mathcal{C})$ denotes the class of all pure submodules of products of modules from $\mathcal{C}$, and $\langle \mathcal{C} \rangle$ denotes the \emph{definable closure} of $\mathcal{C}$, i.e., the smallest definable class containing $\mathcal{C}$. If $\mathcal{C} = \{C\}$, we write simply $\mathrm{Cogen}(C)$, $\mathrm{Cogen}_*(C)$, and $\langle C \rangle$, respectively.

It is well known that every definable class is closed under pure-epimorphic images \cite[Theorem~3.4.8]{Prest2009Purity}. 
Moreover, the definable closure $\langle \mathcal{C} \rangle$ can be constructed by first closing $\mathcal{C}$ under products, then under pure submodules, and finally under pure-epimorphic images \cite[Lemma 2.9]{herbera2014definable}. Furthermore, for any definable class $\mathcal{D}$, there exists a pure-injective module $C \in \mathcal{D}$ such that $\mathrm{Cogen}_*(C) = \mathcal{D}$ \cite[Corollary~5.3.52]{Prest2009Purity}; such a $R$-module is referred to as an \emph{elementary cogenerator} of $\mathcal{D}$. 

We finish this section with some facts related to (infinitely) presented $R$-modules. Let $\lambda$ be a cardinal; we denote by $\lambda^+$ the next cardinal of $\lambda$. We say that $\lambda$ is \textit{singular} if it is equal to the sum of a family consisting of less than $\lambda$ cardinals which are strictly smaller than $\lambda$. Otherwise, the cardinal is \textit{regular}.

Given an infinite regular cardinal $\lambda$ and a $R$-module $M$, we say that $M$ is $<\!\lambda$-\emph{generated} if it admits a generating set of cardinality strictly smaller than $\lambda$. 
Similarly, $M$ is $<\!\lambda$-\emph{presented} if there exists an exact sequence $R^{(\gamma)} \longrightarrow R^{(\mu)} \longrightarrow M \longrightarrow 0$, with cardinals $\mu, \gamma < \lambda$. A short exact sequence
\begin{displaymath}
    0 \rightarrow M \xrightarrow{f} L \xrightarrow{g} N \rightarrow 0,
\end{displaymath}
is \textit{$\lambda$-pure} if every $<\lambda$-presented module is projective with respect to it. In this case, $f$ is called a \textit{$\lambda$-pure monomorphism} and $g$, a \textit{$\lambda$-pure epimorphism}. If $\lambda=\aleph_0$, we obtain the usual pure-exact sequences.

Since some of our results involve $\lambda$-continuous directed systems, we next recall a few definitions from \cite{sarich2020singular} that will be used throughout the paper: 
\begin{itemize}
    \item For a regular uncountable cardinal $\lambda$, a directed system 
    \[
    \mathcal{M} = \left( M_i, f_{ji}: M_i \to M_j \mid i<j \in I \right)
    \] 
    of modules is called \emph{$\lambda$-continuous} if the poset $(I,\leq)$ has suprema of all chains of length $<\lambda$, 
    and for any such chain $J \subseteq I$, one has $M_{\sup J} = \varinjlim_{j \in J} M_j$.
    \item Let $M$ be a $R$-module and $\mathcal{C}$ a class of $R$-modules. 
    We say that $M$ is \emph{almost $(\mathcal{C},\lambda)$-projective} if $M$ is the direct limit of a $\lambda$-continuous directed system $\mathcal{S}$ consisting of $<\!\lambda$-presented modules from $^{\perp}\mathcal{C}$.
\end{itemize}

                    
\section{\texorpdfstring{$n$-$\Sigma$}{n-Sigma}-cotorsion modules}

Let $M$ be an $R$-module. We provide a characterization of when every direct sum of copies of $M$ has finite cotorsion dimension. To achieve this, we extend \cite[Theorem~3.3]{sarich2020singular} to the broader class $\mathrm{Cot}_n$ for every integer $n \geq 0$. This generalization is motivated by the goal of understanding the structure of $R$-modules with bounded cotorsion dimension and of providing a framework for subsequent characterizations of rings in terms of such $R$-modules. 

\begin{definition} \label{def:n-Sigma-cotorsion-module}
Let $M$ be an $R$-module, and let $n \geq 0$ be an integer. We say that $M$ is \emph{$n$-$\Sigma$-cotorsion} if every direct sum of copies of $M$ belongs to $\mathrm{Cot}_n$. In the case $n=0$, $M$ is simply called \emph{$\Sigma$-cotorsion}.
\end{definition}

\begin{lemma}\label{lemma:nSigma_cotorsion_equiv}
Let $n \geq 0$ be an integer, and let $\mathcal{C}$ be a class of $R$-modules that is closed under direct products. Then the following statements are equivalent:
\begin{enumerate}[label=(\arabic*)]
    \item Every $R$-module in the class $\mathcal{C}$ is $n$-$\Sigma$-cotorsion.
    \item For every family of $R$-modules $\{C_i \mid i \in I\} \subseteq \mathcal{C}$, the direct sum $\bigoplus_{i \in I} C_i$ belongs to $\mathrm{Cot}_n$.
\end{enumerate}
\begin{proof}
Clearly, (2) $\Rightarrow$ (1). Thus, it suffices to prove that (1) $\Rightarrow$ (2). Let $\{C_i \mid i \in I\} \subseteq \mathcal{C}$ be a family of $R$-modules, and set $T := \prod_{i \in I} C_i$. By hypothesis, $T \in \mathcal{C}$. Observe that each $C_i$ is a direct summand of $T$. Consequently, the $R$-module $\bigoplus_{i \in I} C_i$
is a direct summand of $T^{(I)}$. By (1), the module $T$ is $n$-$\Sigma$-cotorsion. Since the class $\mathrm{Cot}_n$ is closed under direct summands (\cite[Proposition 19.2.1]{MaoDing2006CotorsionDimension}), it follows that $\bigoplus_{i \in I} C_i$ belongs to $\mathrm{Cot}_n$.
\end{proof}
\end{lemma}

The following class of modules will play a central role in the study of $\mathrm{Cot}_n$:

\begin{definition} \label{definition_of_C_n}
Let $n \geq 1$ be an integer. 
We denote by $\mathcal{C}_n$ the class of all $R$-modules $K$ for which there exists an exact sequence
\[
0 \longrightarrow K \longrightarrow L_{n-1} \longrightarrow L_{n-2} \longrightarrow \cdots \longrightarrow L_0 \longrightarrow M \longrightarrow 0,
\]
where each $L_i$ is a free $R$-module and $M$ is a flat $R$-module. 
Note that the class $\mathcal{C}_n$ coincides with the class of all $R$-modules $K$ for which there exists an exact sequence as above in which the modules $L_i$ are merely projective (rather than free).
\end{definition}
To establish our main result, we first develop several preliminary tools in the form of three key lemmas. These lemmas capture essential properties of $R$-modules belonging to the class $\mathrm{Cot}_n$.
\begin{lemma} \label{lemma:flat-Mittag-leffler}
Let $n \geq 1$ be an integer. Then the following statements hold:
\begin{enumerate}[label=(\arabic*)]
\item Every $R$-module in $\mathcal{C}_n$ is flat and Mittag--Leffler. In particular, for any uncountable cardinal $\lambda$, every $<\lambda$-generated $R$-module $K \in \mathcal{C}_n$ is $<\lambda$-presented.
\item One has the equality $\mathrm{Cot}_n = \mathcal{C}_n^{\perp}$. Consequently, $\mathcal{C}_n \subseteq {}^{\perp}\mathrm{Cot}_n$.
\end{enumerate}
\begin{proof}
To prove~(1), observe that every $R$-module in $\mathcal{C}_n$ is a pure submodule of a free (and hence flat and Mittag--Leffler) $R$-module. Moreover, the class of flat Mittag--Leffler modules is closed under pure submodules; see \cite[Corollary~3.20(a)]{GoebelTrlifaj2012}. The final assertion then follows directly from \cite[Corollary~3.16(c)]{GoebelTrlifaj2012}.

\noindent We now prove~(2). Recall that a $R$-module $D$ belongs to $\mathrm{Cot}_n$ if and only if $\mathrm{Ext}^{\,n+1}_{R}(F,D) = 0$ for every flat $R$-module $F$ \cite[Proposition~19.2.1]{MaoDing2006CotorsionDimension}.

\noindent Let $A \in \mathcal{C}_n^{\perp}$ and let $F$ be a flat $R$-module. Take a partial free resolution of $F$ of length $n$:
\[
0 \longrightarrow V \longrightarrow F_{n-1} \longrightarrow \cdots \longrightarrow F_0 \longrightarrow F \longrightarrow 0.
\]
By construction, $V \in \mathcal{C}_n$, and there is an isomorphism $\mathrm{Ext}^1_R(V,A) \cong \mathrm{Ext}^{\,n+1}_R(F,A)$. Since $A \in \mathcal{C}_n^{\perp}$, we have $\mathrm{Ext}^1_R(V,A)=0$, hence $\mathrm{Ext}^{\,n+1}_R(F,A)=0$, showing that $A \in \mathrm{Cot}_n$.

\noindent Conversely, let $T \in \mathrm{Cot}_n$ and $K \in \mathcal{C}_n$. Then there exists an exact sequence

\[
0 \longrightarrow K \longrightarrow L_{n-1} \longrightarrow \cdots \longrightarrow L_0 \longrightarrow M \longrightarrow 0,
\]
with $L_i$ free and $M$ flat. Then $\mathrm{Ext}^1_R(K,T) \cong \mathrm{Ext}^{\,n+1}_R(M,T) = 0$, so $T \in \mathcal{C}_n^{\perp}$. This proves the equality $\mathrm{Cot}_n = \mathcal{C}_n^{\perp}$ and, as a consequence, the inclusion $\mathcal{C}_n \subseteq {}^{\perp}\mathrm{Cot}_n$.
\end{proof}
\end{lemma}

\begin{lemma}\label{lemma:Simson}
Fix an integer $n \geq 1$. Let $\lambda$ be an uncountable cardinal, and let $K \in \mathcal{C}_n$. Then there exists an exact sequence
\[
0 \longrightarrow K \longrightarrow F(I_{n-1}) \xrightarrow{d_{n-1}} F(I_{n-2}) \longrightarrow \cdots \longrightarrow F(I_{1}) \xrightarrow{d_1} F(I_{0}) \xrightarrow{d_0} M \longrightarrow 0,
\]
where, for each $r = 0,1,\ldots,n-1$, the module $F(I_r)$ denotes the free $R$-module with basis $I_r$.

\noindent Let $K'$ be any $<\lambda$-generated submodule of $K$, let $L'$ be any $<\lambda$-generated submodule of $\mathrm{Im}(d_1)$, and let $J_r \subseteq I_r$ be arbitrary subsets of cardinality $<\lambda$ for all $r = 1,2,\ldots,n-1$. Then there exist subsets $J'_r$ with $J_r \subseteq J'_r \subseteq I_r$ and $\lvert J'_r \rvert < \lambda$ for all $r = 1,2,\ldots,n-1$, together with an exact sequence
\[
0 \longrightarrow K'' \longrightarrow F(J'_{n-1}) \longrightarrow F(J'_{n-2}) \longrightarrow \cdots \longrightarrow F(J'_{1}) \longrightarrow L'' \longrightarrow 0,
\]
satisfying the following conditions:
\begin{enumerate}[label=(\arabic*)]
\item The inclusions $K' \subseteq K'' \subseteq K$ and $L' \subseteq L'' \subseteq \mathrm{Im}(d_1)$ hold, and both $K''$ and $L''$ are $<\lambda$-generated.
\item The submodule $L''$ is pure in $F(I_0)$. Consequently, $K'' \in \mathcal{C}_n$.
\end{enumerate}
\begin{proof}
This result corresponds to \cite[Proposition~1.4]{Simson1974ProjectiveResolutions}.
\end{proof}
\end{lemma}

\begin{lemma} \label{lemma:lambda-limite_C_n}
Let $n \geq 1$ be an integer. Then, for every regular uncountable cardinal $\lambda$, every $R$-module in $\mathcal{C}_n$ can be expressed as the direct limit of a $\lambda$-continuous directed system consisting of $<\lambda$-presented $R$-modules from $\mathcal{C}_n$.

\begin{proof}
We distinguish two cases. We first treat the case $n = 1$. Let $K \in \mathcal{C}_1$. Then there exists a short exact sequence
\[
0 \longrightarrow K \longrightarrow F(I_0) \longrightarrow M \longrightarrow 0,
\]
where $F(I_0)$ denotes the free $R$-module with basis $I_0$, and $M$ is a flat $R$-module.

\noindent Let $\mathcal{S}_\lambda$ be the class of all $<\lambda$-generated pure submodules of $K$. By Lemma~\ref{lemma:flat-Mittag-leffler}(1), the $R$-module $K$ is flat and Mittag--Leffler. It then follows from \cite[Corollary~3.20]{GoebelTrlifaj2012} that every subset of $K$ of cardinality $<\lambda$ is contained in a pure submodule of $K$ which is $<\lambda$-presented. Using this fact, together with the regularity of $\lambda$, we conclude that $\mathcal{S}_\lambda$, ordered by inclusion, forms a $\lambda$-continuous directed system whose direct limit is $K$. Finally, note that every module in $\mathcal{S}_\lambda$ belongs to $\mathcal{C}_1$, since the class $\mathcal{C}_1$ is closed under pure submodules, and is $<\lambda$-presented by Lemma~\ref{lemma:flat-Mittag-leffler}(1). Therefore, $K$ can be expressed as the direct limit of a $\lambda$-continuous directed system consisting of $<\lambda$-presented $R$-modules from $\mathcal{C}_1$.

\noindent We now turn to the proof of the lemma for $n \geq 2$. Let $K \in \mathcal{C}_n$. Then there exists an exact sequence
\[
0 \longrightarrow K \longrightarrow F(I_{n-1}) \xrightarrow{d_{n-1}} F(I_{n-2}) \longrightarrow \cdots \longrightarrow F(I_1) \xrightarrow{d_1} F(I_0) \xrightarrow{d_0} M \longrightarrow 0,
\]
where, for each $r = 0,1,\ldots,n-1$, $F(I_r)$ denotes the free $R$-module with basis $I_r$.

\noindent Let $\mathcal{S}_K$ denote the class of exact complexes of the form
\[
0 \longrightarrow K' \longrightarrow F(I'_{n-1}) \longrightarrow F(I'_{n-2}) \longrightarrow \cdots \longrightarrow F(I'_1) \longrightarrow M' \longrightarrow 0,
\]
where:
\begin{enumerate}[label=(\arabic*)]
\item $K'$ is a $<\lambda$-generated submodule of $K$ and $M'$ is a $<\lambda$-generated pure submodule of $\mathrm{Im}(d_1)$.
\item $F(I'_r)$ is free with basis $I'_r \subseteq I_r$, satisfying $\lvert I'_r \rvert < \lambda$ for each $r=1,2,\ldots,n-1$.
\end{enumerate}
Since $M'$ is a pure submodule of $\mathrm{Im}(d_1)$ and $\mathrm{Im}(d_1)$ is a pure submodule of $F(I_0)$, it follows that $K' \in \mathcal{C}_n$.

\noindent We define an order on $\mathcal{S}_K$ as follows. Given $\mathbb{C}, \hat{\mathbb{C}} \in \mathcal{S}_K$:
\[
\begin{aligned}
\mathbb{C} &= 0 \longrightarrow K' \longrightarrow F(I'_{n-1}) \longrightarrow F(I'_{n-2}) \longrightarrow \cdots \longrightarrow F(I'_1) \longrightarrow M' \longrightarrow 0, \\
\hat{\mathbb{C}} &= 0 \longrightarrow K'' \longrightarrow F(I''_{n-1}) \longrightarrow F(I''_{n-2}) \longrightarrow \cdots \longrightarrow F(I''_1) \longrightarrow M'' \longrightarrow 0.
\end{aligned}
\]

\noindent We say that $\mathbb{C}$ is less than or equal to $\hat{\mathbb{C}}$, and write $\mathbb{C} \leq \hat{\mathbb{C}}$, if and only if $K' \subseteq K''$, $I'_r \subseteq I''_r$ for each $r = 1, \dots, n-1$, and $M' \subseteq M''$. It is straightforward to verify, using Lemma~\ref{lemma:Simson}, that $(\mathcal{S}_K, \leq)$ is a directed system, with structure morphisms given by the inclusion maps.

\noindent To show that $(\mathcal{S}_K, \leq)$ is $\lambda$-continuous, let $\{\mathbb{C}_\alpha \mid \alpha < \theta\} \subseteq \mathcal{S}_K$ be a chain of length $\theta < \lambda$. For each $\alpha < \theta$, the complex:
\[
\mathbb{C}_\alpha = 0 \longrightarrow K'_\alpha \longrightarrow F(I^{\alpha}_{n-1}) \longrightarrow \cdots \longrightarrow F(I^{\alpha}_1) \longrightarrow M'_{\alpha} \longrightarrow 0
\]
satisfies the following properties: $K'_\alpha$ is $<\lambda$-generated, $M'_{\alpha}$ is $<\lambda$-generated and pure in $\mathrm{Im}(d_1)$, and each $F(I^\alpha_r)$ is a free module with basis $I^\alpha_r \subseteq I_r$ such that $\lvert I^\alpha_r \rvert < \lambda$. Consequently, the union of all complexes in the chain is itself the exact complex:
\[
0 \longrightarrow \bigcup_{\alpha<\theta} K'_\alpha \longrightarrow F\Big(\bigcup_{\alpha<\theta} I^{\alpha}_{n-1}\Big) \longrightarrow \cdots \longrightarrow F\Big(\bigcup_{\alpha<\theta} I^{\alpha}_1\Big) \longrightarrow \bigcup_{\alpha<\theta} M'_{\alpha} \longrightarrow 0.
\]
By the regularity of $\lambda$ and the fact that the union of any chain of pure submodules is again a pure submodule \cite[Lemma~2.25(d)]{GoebelTrlifaj2012}, this complex belongs to $\mathcal{S}_K$. 

\noindent Now, by Lemma~\ref{lemma:Simson}, the direct limit of the $\lambda$-continuous directed system $\mathcal{S}_K$ is the complex:
\[
0 \longrightarrow K \longrightarrow F(I_{n-1}) \xrightarrow{d_{n-1}} F(I_{n-2}) \longrightarrow \cdots \longrightarrow F(I_1) \xrightarrow{d_1} \mathrm{Im}(d_1) \longrightarrow 0.
\]

\noindent Consequently, $K$ can be expressed as the direct limit of a $\lambda$-continuous directed system consisting of $<\lambda$-presented $R$-modules from $\mathcal{C}_n$ (recall that, by Lemma~\ref{lemma:flat-Mittag-leffler}(1), every $R$-module in $\mathcal{C}_n$ is $<\lambda$-presented).
\end{proof}
\end{lemma}

\begin{theorem} \label{theorem:Generalization_S-S}
Let $n \geq 0$ be an integer, and let $M$ be a $n$-$\Sigma$-cotorsion $R$-module. Then every $R$-module in the definable closure of $\{M\}$ belongs to $\mathrm{Cot}_n$.
\begin{proof}
The case $n=0$ is exactly \cite[Theorem~3.3]{sarich2020singular}. Now, assume that $n \geq 1$; we proceed with an argument similar to the one used in the proof of \cite[Theorem~3.3]{sarich2020singular}.

\noindent First, we show that $\mathrm{Cogen}_{*}(M) \subseteq \mathrm{Cot}_n$. Let $K \in \mathcal{C}_n$. By Lemma~\ref{lemma:lambda-limite_C_n}, $K$ is a $\lambda$-continuous direct limit of $<\lambda$-presented $R$-modules from $\mathcal{C}_n$. Consequently, $K$ is almost $(M^{(\kappa)},\lambda)$-projective for any cardinal $\kappa$ and any regular $\lambda > \aleph_0$. Hence, by \cite[Proposition~3.2]{sarich2020singular}, $K \in {}^{\perp}\mathrm{Cogen}_{*}(M)$. It follows that $\mathcal{C}_n \subseteq {}^{\perp}\mathrm{Cogen}_{*}(M)$. Since $\mathrm{Cot}_n = \mathcal{C}_n^{\perp}$ by Lemma~\ref{lemma:flat-Mittag-leffler}(2), we conclude that $\mathrm{Cogen}_{*}(M) \subseteq \mathrm{Cot}_n$.

\noindent Finally, let $T$ be any $R$-module in the definable closure of $\{M\}$. Then there exists a short exact sequence
\[
0 \longrightarrow A \longrightarrow B \longrightarrow T \longrightarrow 0,
\]
where $A,B \in \mathrm{Cogen}_{*}(M)$. Since $\mathrm{Cogen}_{*}(M) \subseteq \mathrm{Cot}_n$ and $\mathrm{Cot}_n$ is closed under cokernels of monomorphisms (see \cite[Proposition~19.2.3]{MaoDing2006CotorsionDimension}), it follows that $T \in \mathrm{Cot}_n$.
\end{proof}
\end{theorem}

\section{\texorpdfstring{$n\text{-}\Sigma$}{n-Sigma}-cotorsion rings} \label{section:When direct sums of cotorsion have finite cotorsion dimension}

In this section we will use Theorem~\ref{theorem:Generalization_S-S} to extend  \cite[Theorem~19 and Corollary~20]{GuilAsensioHerzog2005} to characterize left $n$-perfect rings. We recall that, given $n\geq 0$, a ring $R$ is said to be \emph{left $n$-perfect} if every flat module has projective dimension at most $n$. Equivalently, the invariant  $\textrm{splf}(R)$, defined as the supremum of the projective lengths of flat $R$-modules, is $\leq n$. Notice that, as a consequence of \cite[Lemma~3.9]{cortes2016products}, if every flat $R$-module has finite projective dimension, then $R$ is left $n$-perfect for some $n\geq 0$. Notice also that $R$ is left $n$-perfect if and only if every $R$-module belongs to $\mathrm{Cot}_n$ (see, \cite[Corollary~19.2.7]{MaoDing2006CotorsionDimension}). Classical examples of left $n$-perfect rings include commutative Noetherian rings of finite Krull dimension at most $n$ and noncommutative rings $R$ of finite left Gorenstein global dimension at most $n$ (by \cite[Theorem 2.28]{Holm2004Gorenstein} and \cite[Proposition 6]{Jensen1970}). Also, by \cite{Simson1974}, every ring of cardinality $\aleph_n$ is left (and right) $(n+1)$-perfect.

\begin{definition} \label{def:n-Sigma-cotorsion-ring}
Let $n \geq 0$ be an integer. $R$ is said to be a left \emph{$n$-$\Sigma$-cotorsion ring} if $R$, regarded as a left $R$-module, is $n$-$\Sigma$-cotorsion. In the case $n=0$, $R$ is simply called a left \emph{$\Sigma$-cotorsion ring}.
\end{definition}

The following theorem shows that the class of left $n$-perfect rings coincides precisely with the class of left $n$-$\Sigma$-cotorsion rings, and it provides several equivalent characterizations of this family of rings.

\begin{theorem} \label{theorem:Generalization_Pedro_Ivo}
Let $n \geq 0$ be an integer. For the ring $R$, the following statements are equivalent:
\begin{enumerate}[label=(\arabic*)]
\item Every cotorsion $R$-module is $n$-$\Sigma$-cotorsion.
\item Any direct limit of cotorsion $R$-modules belongs to $\mathrm{Cot}_n$.
\item $R$ is left $n$-perfect.
\item The class $\mathrm{Cot}_n$ is closed under direct sums.
\item The class $\mathrm{Cot}_n$ is closed under direct limits.
\item $R$ is left $n$-$\Sigma$-cotorsion.
\item $C(R)$, the cotorsion envelope of ${_R}R$, is $n$-$\Sigma$-cotorsion.
\item Every pure-injective $R$-module is $n$-$\Sigma$-cotorsion.
\item $PE(R)$, the pure-injective envelope of ${_R}R$, is $n$-$\Sigma$-cotorsion.
\item Every direct limit of pure-injective $R$-modules belongs to $\mathrm{Cot}_n$.
\end{enumerate}
\begin{proof}
It is immediate that (3)$\Rightarrow$(2) and that (2)$\Rightarrow$(1). We now show that (1)$\Rightarrow$(3).

\noindent Since the class $\langle \mathrm{Cot} \rangle$ is definable, it admits an elementary cogenerator $D_0 \in \langle \mathrm{Cot} \rangle$. Moreover, $D_0$ is pure-injective and hence belongs to $\mathrm{Cot}$. Every $R$-module $T \in \langle \mathrm{Cot} \rangle$ is a pure submodule of a direct product of copies of $D_0$, and therefore $\langle \mathrm{Cot} \rangle = \langle D_0 \rangle$. By (1), $D_0$ is $n$-$\Sigma$-cotorsion, and thus Theorem~\ref{theorem:Generalization_S-S} yields $\langle D_0 \rangle \subseteq \mathrm{Cot}_n$. Consequently, $\langle \mathrm{Cot} \rangle \subseteq \mathrm{Cot}_n$. Since the cotorsion pair $(\mathrm{Flat}, \mathrm{Cot})$ is complete, for every $R$-module $M$ there exists a short exact sequence
\[
0 \longrightarrow M \longrightarrow C \longrightarrow F \longrightarrow 0,
\]
with $C \in \mathrm{Cot}$ and $F \in \mathrm{Flat}$. As $M$ is a pure submodule of $C$, it follows that $M \in \langle \mathrm{Cot} \rangle \subseteq \mathrm{Cot}_n$. In particular, every $R$-module has cotorsion dimension at most $n$, and hence $R$ is left $n$-perfect by \cite[Corollary~19.2.7]{MaoDing2006CotorsionDimension}.

\noindent We have thus established the equivalence of conditions $(1)$, $(2)$, and $(3)$. Furthermore, observe that (3)$\Rightarrow$(5), (5)$\Rightarrow$(4), and (4)$\Rightarrow$(1). Therefore, the first five conditions are equivalent.

\noindent Clearly, $(3)$ implies $(6)$ and $(7)$. We next show that (6)$\Rightarrow$(3). Applying Theorem~\ref{theorem:Generalization_S-S} to the $R$-module $R$, we obtain $\langle R \rangle \subseteq \mathrm{Cot}_n$. Since $\langle R \rangle = \langle \mathrm{Flat} \rangle$, it follows that every flat $R$-module belongs to $\mathrm{Cot}_n$. As the cotorsion pair $(\mathrm{Flat}, \mathrm{Cot})$ is complete, for any $R$-module $M$ there exists a short exact sequence
\[
0 \longrightarrow C \longrightarrow F \longrightarrow M \longrightarrow 0,
\]
with $C \in \mathrm{Cot}$ and $F \in \mathrm{Flat}$. Hence, $M \in \mathrm{Cot}_n$ by \cite[Proposition 19.2.3]{MaoDing2006CotorsionDimension}. Consequently, every $R$-module has cotorsion dimension at most $n$, and therefore $R$ is left $n$-perfect by \cite[Corollary~19.2.7]{MaoDing2006CotorsionDimension}.

\noindent Now, we prove that (7)$\Rightarrow$(3). Applying Theorem~\ref{theorem:Generalization_S-S} to the $R$-module $C(R)$, we obtain $\langle C(R) \rangle \subseteq \mathrm{Cot}_n$. Moreover, since $R$ is a pure submodule of $C(R)$, it follows that $\langle \mathrm{Flat} \rangle = \langle R \rangle \subseteq \langle C(R) \rangle \subseteq \mathrm{Cot}_n$. Thus, every flat $R$-module belongs to $\mathrm{Cot}_n$, and the argument proceeds exactly as in the proof that (6)$\Rightarrow$(3).

\noindent At this point, we have established the equivalence of the first seven conditions. It is clear that $(3)$ implies both $(8)$ and $(9)$. We now show that (8)$\Rightarrow$(3). As shown in the proof of (1)$\Rightarrow$(3), the class of all $R$-modules coincides with the definable closure $\langle \mathrm{Cot} \rangle$. Moreover, there exists a pure-injective $R$-module $D_0$ such that $\langle \mathrm{Cot} \rangle = \langle D_0 \rangle$. By assumption $(8)$, $D_0$ is $n$-$\Sigma$-cotorsion. Applying Theorem~\ref{theorem:Generalization_S-S}, we obtain $\langle D_0 \rangle \subseteq \mathrm{Cot}_n$. Hence, every $R$-module belongs to $\mathrm{Cot}_n$, and therefore $R$ is left $n$-perfect by \cite[Corollary~19.2.7]{MaoDing2006CotorsionDimension}.

\noindent Now, we show that (9)$\Rightarrow$(6). By assumption $(9)$, $PE(R)$ is $n$-$\Sigma$-cotorsion. Applying Theorem~\ref{theorem:Generalization_S-S}, we deduce that $\langle PE(R) \rangle \subseteq \mathrm{Cot}_n$. Since $R$ is a pure submodule of $PE(R)$ and definable classes are closed under direct sums, it follows that $R$ is a left $n$-$\Sigma$-cotorsion ring. Thus, condition $(6)$ holds.

\noindent Finally, it is clear that (10)$\Rightarrow$(8), and that (3)$\Rightarrow$(10). Hence, all the conditions are equivalent.\end{proof}
\end{theorem}

\begin{remark}
\begin{enumerate}[label=(\arabic*)]

\item Let $n \geq 0$ be an integer. By applying Lemma~\ref{lemma:nSigma_cotorsion_equiv} to the class $\mathrm{Cot}$ (or to the class of pure-injective $R$-modules), together with Theorem~\ref{theorem:Generalization_Pedro_Ivo}, one concludes that a ring $R$ is left $n$-perfect if and only if every direct sum of cotorsion $R$-modules belongs to $\mathrm{Cot}_n$. Equivalently, this holds if and only if every direct sum of pure-injective $R$-modules belongs to $\mathrm{Cot}_n$.

\item By dualizing \cite[Lemma~3.9]{cortes2016products} to the injective setting, one obtains the following result: if $R$ is a ring such that every direct sum of cotorsion $R$-modules (or every direct sum of pure-injective $R$-modules) has finite cotorsion dimension, then there exists an integer $n \geq 0$ such that the cotorsion dimension of every such direct sum is uniformly bounded above by $n$. Consequently, by applying Theorem~\ref{theorem:Generalization_Pedro_Ivo}, one concludes that $R$ is left $n$-perfect.
\end{enumerate}
\end{remark}

Specializing Theorem~\ref{theorem:Generalization_Pedro_Ivo} to the case $n=0$, we obtain the following result, which can be compared with \cite[Theorem~19 and Corollary~20]{GuilAsensioHerzog2005}.

\begin{corollary} \label{corollary:Result_of_Pedro_Ivo}
For any ring $R$, the following statements are equivalent:
\begin{enumerate}[label=(\arabic*)]
\item Every cotorsion $R$-module is $\Sigma$-cotorsion.
\item The class $\mathrm{Cot}$ is closed under direct limits.
\item $R$ is left perfect.
\item $R$ is left $\Sigma$-cotorsion.
\item $C(R)$, the cotorsion envelope of $R$, is $\Sigma$-cotorsion.
\item Every pure-injective $R$-module is $\Sigma$-cotorsion.
\item $PE(R)$, the pure-injective envelope of $R$, is $\Sigma$-cotorsion.
\item Every direct limit of pure-injective modules is cotorsion.
\end{enumerate} 
\end{corollary}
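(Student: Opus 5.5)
The plan is to read this corollary off from Theorem~\ref{theorem:Generalization_Pedro_Ivo} in the case $n=0$, using $\mathrm{Cot}_0=\mathrm{Cot}$. Recall also that a ring is left $0$-perfect exactly when every flat module is projective, which by Bass is the same as being left perfect. With these identifications, conditions (1), (3), (4), (5), (6), (7) and (8) of the corollary become, word for word, conditions (1), (3), (6), (7), (8), (9) and (10) of Theorem~\ref{theorem:Generalization_Pedro_Ivo} with $n=0$. In particular, a $0$-$\Sigma$-cotorsion module is by definition a $\Sigma$-cotorsion module, and being left $0$-$\Sigma$-cotorsion is being left $\Sigma$-cotorsion.

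The only condition needing a remark is (2): ``$\mathrm{Cot}$ is closed under direct limits''. This is condition (5) of the theorem for $n=0$, namely that $\mathrm{Cot}_0$ is closed under direct limits. So it is also equivalent to the others.

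Alternatively, (2) can be handled directly, without going through (5) of the theorem:
\begin{itemize}
\item (3)$\Rightarrow$(2): if $R$ is left perfect, then every module is cotorsion.
\item (2)$\Rightarrow$(1): direct sums of copies of a cotorsion module are direct limits of finite direct sums, and finite direct sums of cotorsion modules are cotorsion.
\end{itemize}

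There is no real obstacle here. The one point to state explicitly is that $0$-perfect coincides with perfect, so that condition (3) of the theorem specializes correctly.
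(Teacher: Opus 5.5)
Your proposal is correct and is exactly the paper's route: the corollary is stated there as the specialization of Theorem~\ref{theorem:Generalization_Pedro_Ivo} to $n=0$. Your matching of conditions is accurate, including (2) corresponding to condition (5) of the theorem and left $0$-perfect meaning left perfect by Bass's theorem, and the extra direct arguments for (3)$\Rightarrow$(2)$\Rightarrow$(1) are valid but not needed.
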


Rings satisfying the conditions of Theorem~\ref{theorem:Generalization_Pedro_Ivo} for $n=1$ are interesting, since they satisfy that pure quotients of cotorsion modules are cotorsion:

\begin{corollary}
    For the ring $R$, the following are equivalent:
    \begin{enumerate}[label=(\arabic*)]
        \item $R$ is left $1$-perfect.

        \item $\mathrm{Cot}$ is closed under pure quotients.
    \end{enumerate}
\end{corollary}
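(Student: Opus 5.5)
(1)$\Rightarrow$(2): Let $C$ be cotorsion and $0 \to A \to C \to B \to 0$ a pure exact sequence. Since $R$ is left $1$-perfect, Theorem~\ref{theorem:Generalization_Pedro_Ivo} (condition (2) with $n=1$) shows that every module in $\langle \mathrm{Cot}\rangle$ lies in $\mathrm{Cot}_1$. I will use this only through the weaker fact that every module lies in $\mathrm{Cot}_1$, i.e. $\mathrm{Ext}^2_R(F,-)=0$ for every flat $F$. Fix a flat $F$ and apply $\mathrm{Hom}_R(F,-)$ to the sequence. The relevant part of the long exact sequence is
\[
\mathrm{Ext}^1_R(F,C)\to \mathrm{Ext}^1_R(F,B)\to \mathrm{Ext}^2_R(F,A).
\]
The left term vanishes because $C$ is cotorsion, and the right term vanishes because $A\in \mathrm{Cot}_1$. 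Hence $\mathrm{Ext}^1_R(F,B)=0$, so $B$ is cotorsion. Purity is not even needed here: over a left $1$-perfect ring, cotorsion modules are closed under arbitrary quotients.

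(2)$\Rightarrow$(1): The plan is to show that every module lies in $\mathrm{Cot}_1$ and then invoke \cite[Corollary~19.2.7]{MaoDing2006CotorsionDimension}. Let $M$ be any module. By completeness of $(\mathrm{Flat},\mathrm{Cot})$ there is an exact sequence $0\to M\to C\to F\to 0$ with $C$ cotorsion and $F$ flat; this sequence is pure since $F$ is flat. Next take the pure-injective envelope $0\to M\to PE(M)\to N\to 0$, which is pure exact. The goal is to see that $N$ is cotorsion, since then $M\in\mathrm{Cot}_1$, because $PE(M)$ is cotorsion and $\mathrm{Ext}^2_R(F',M)\cong \mathrm{Ext}^1_R(F',N)=0$ for flat $F'$.

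The main obstacle is that $N$ is a pure quotient of $PE(M)$ only as a cokernel, whereas (2) controls pure epimorphic images. Here, however, $PE(M)\to N$ is itself a pure epimorphism, being the cokernel of a pure monomorphism. So (2) applies directly and gives that $N$ is cotorsion. Hence every module lies in $\mathrm{Cot}_1$, and $R$ is left $1$-perfect.

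If I have misread (2) as requiring an additional hypothesis, the fallback is to use that cotorsion modules are closed under direct products, so (2) implies that $\langle\mathrm{Cot}\rangle$ is closed under pure quotients. Every module in this class would then be a pure quotient of a cotorsion module, and the argument above would apply verbatim.
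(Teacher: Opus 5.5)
Your argument is correct and is essentially the paper's: the paper takes the cotorsion envelope $0\to M\to C(M)\to L\to 0$ (pure since $L$ is flat, which is exactly the first sequence you wrote and then set aside), uses (2) to get $L$ cotorsion, and dimension-shifts via hereditariness of $(\mathrm{Flat},\mathrm{Cot})$; you run the same argument with $PE(M)$ in place of $C(M)$. Note that your isomorphism $\mathrm{Ext}^2_R(F',M)\cong\mathrm{Ext}^1_R(F',N)$ also needs $\mathrm{Ext}^2_R(F',PE(M))=0$, which comes from hereditariness, and that the ``fallback'' paragraph is unnecessary.
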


\begin{proof}
    (1) $\Rightarrow$ (2). Trivial.
    
    \noindent (2) $\Rightarrow$ (1). Let $M$ be an $R$-module. If we take its cotorsion envelope,

    \begin{displaymath}
        0 \longrightarrow M \longrightarrow C(M) \longrightarrow L \longrightarrow 0,
    \end{displaymath}
    we get, by (2), that $L$ is cotorsion. Then, for any flat $R$-module $F$ we obtain the exact sequence
    \begin{displaymath}
        \textrm{Ext}^1(F,L) \longrightarrow \textrm{Ext}^2(F,M) \longrightarrow \textrm{Ext}^2(F,C(M)) \longrightarrow 0
    \end{displaymath}
    where we deduce that $\textrm{Ext}^2(F,M)=0$, since the cotorsion pair $(\textrm{Flat},\textrm{Cot})$ is hereditary. This means that the cotorsion dimension of $M$ is less than or equal to $1$ and $R$ is left $1$-perfect.
\end{proof}

Since there exist rings which are not $1$-perfect, the class $\textrm{Cot}$ is not, in general, closed under pure quotients. However, $\textrm{Cot}$ is always closed under certain $\kappa$-pure quotients:

\begin{proposition}
    Denote by $\lambda$ the cardinal $(|R|+\aleph_0)^+$. Then $\textrm{Cot}$ is closed under $\lambda$-pure quotients.
\end{proposition}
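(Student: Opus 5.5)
Let $\lambda=(|R|+\aleph_0)^+$ and let $0\to A\to C\xrightarrow{g} T\to 0$ be $\lambda$-pure exact with $C$ cotorsion. We must show $\mathrm{Ext}^1_R(F,T)=0$ for every flat $F$. The plan is to reduce to small flat modules and then use $\lambda$-purity to lift maps from them.

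\textbf{Reduction to small flat modules.} The class of flat modules is deconstructible: every flat module is $\mathcal{S}$-filtered, where $\mathcal{S}$ is a set of flat modules of cardinality at most $|R|+\aleph_0$. This is the Bican--El Bashir--Enochs argument; see \cite{GoebelTrlifaj2012}. By Eklof's lemma, $\mathrm{Cot}=\mathcal{S}^{\perp}$. So it suffices to prove $\mathrm{Ext}^1_R(S,T)=0$ for flat $S$ with $|S|\le|R|+\aleph_0<\lambda$. Such an $S$ is $<\lambda$-generated. Since $R$ has fewer than $\lambda$ elements, a free presentation $0\to K\to R^{(\mu)}\to S\to 0$ with $\mu<\lambda$ has $|K|<\lambda$, so $S$ is $<\lambda$-presented. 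The same holds for $K$ and for the free module $R^{(\mu)}$.

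\textbf{Lifting argument.} Fix such an $S$ and take an extension $0\to T\to E\to S\to 0$. Form its pullback along $g$, obtaining $0\to C\to E'\to S\to 0$ together with a map $E'\to E$ compatible with $g$. Since $C$ is cotorsion and $S$ is flat, this pulled-back sequence splits. It therefore suffices to show that the map $\mathrm{Ext}^1(S,C)\to\mathrm{Ext}^1(S,T)$ induced by $g$ is surjective. By the long exact sequence, this amounts to showing that $\mathrm{Ext}^1(S,g)$ is onto, which follows if $\mathrm{Ext}^2(S,A)\to\mathrm{Ext}^2(S,C)$ is injective.

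It is more direct to argue with the presentation. An element of $\mathrm{Ext}^1(S,T)$ is represented by a map $\varphi\colon K\to T$, modulo maps that extend to $R^{(\mu)}$. Because $K$ is $<\lambda$-presented and $g$ is a $\lambda$-pure epimorphism, $\varphi$ lifts to a map $\psi\colon K\to C$ with $g\psi=\varphi$. Since $\mathrm{Ext}^1(S,C)=0$, $\psi$ extends to a map $R^{(\mu)}\to C$. Composing this extension with $g$ gives an extension of $\varphi$ to $R^{(\mu)}$, so the class of $\varphi$ in $\mathrm{Ext}^1(S,T)$ is zero. This completes the argument.

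\textbf{Main obstacle.} The key step is the reduction to $\le|R|+\aleph_0$-presented flat test modules, namely the deconstructibility of $\mathrm{Flat}$ with exactly that cardinal bound, combined with Eklof's lemma. The choice $\lambda=(|R|+\aleph_0)^+$ is made precisely so that both $S$ and its syzygy $K$ are $<\lambda$-presented. Once this is in place, the lifting step only uses the $\lambda$-purity of $g$ applied to $K$.
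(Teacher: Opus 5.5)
Your argument is correct and is essentially the paper's proof: reduce to $<\lambda$-presented flat test modules (the paper cites a lemma for this, while you spell it out via Bican--El Bashir--Enochs deconstructibility plus Eklof's lemma), then lift $\varphi\colon K\to T$ through the $\lambda$-pure epimorphism, extend to the free module because $C$ is cotorsion, and compose with $g$; the only cosmetic difference is that the paper gets $<\lambda$-presentedness of the syzygy $K$ from $K$ being flat Mittag--Leffler, whereas you get it by counting cardinalities. The ``pullback along $g$'' paragraph should simply be deleted: it is not a well-defined construction, since $g$ induces a map $\mathrm{Ext}^1(S,C)\to\mathrm{Ext}^1(S,T)$ by pushout, not a way to pull an extension of $S$ by $T$ back to one by $C$. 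The presentation argument that follows does all the work.
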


\begin{proof}
    Let $C$ be a cotorsion $R$-module and $f:C \rightarrow L$ a $\lambda$-pure epimorphism. In view of \cite[Lemma 3.2.7]{GoebelTrlifaj2012}, $L$ will be cotorsion provided that $\textrm{Ext}^1(F,L)=0$ for every $<\lambda$-presented flat $R$-module.

    \noindent Let $F$ be a $<\lambda$-presented flat $R$-module and take a short exact sequence
    \begin{displaymath}
        0 \longrightarrow K \longrightarrow P \longrightarrow F \longrightarrow 0,
    \end{displaymath}
    with $P$ projective. For any $g:K \rightarrow L$, since $K$ is $<\lambda$-presented by Lemma \ref{lemma:flat-Mittag-leffler}(1) and $f$ is $\lambda$-pure, there exists $h:K \rightarrow C$ with $fh=g$. Since $C$ is cotorsion, $g$ has an extension, $j$, from $P$ to $C$. Then, $fj$ is an extension of $g$. Since $g$ is arbitrary, we conclude that $\textrm{Ext}^1(F,L)=0$.
\end{proof}

\section{Weakly \texorpdfstring{$n\text{-}\Sigma$}{n-Sigma}-cotorsion rings}
In this section, we consider a weaker version of left $n$-perfect rings as characterized in Theorem \ref{theorem:Generalization_Pedro_Ivo}, by considering rings $R$ for which every injective $R$-module is $n$-$\Sigma$-cotorsion, where $n \geq 0$ is an integer.

\begin{theorem} \label{theorem:Caracterization_sums_of_injectives_are_in_Cot_n}
    Let $n\geq0$ be an integer. Then, the following assertions are equivalent over any ring $R$:
    \begin{enumerate}[label=(\arabic*)]
        \item Every injective $R$-module is $n$-$\Sigma$-cotorsion.
        \item $R^{+}$, regarded as a left $R$-module, is $n$-$\Sigma$-cotorsion.
        \item The definable closure of injective $R$-modules is contained in $\mathrm{Cot}_n$.
        \item Any direct limit of FP-injective $R$-modules is in $\mathrm{Cot}_n$.
        \item Any direct limit of injective $R$-modules is in $\mathrm{Cot}_n$.
        \item Every FP-injective $R$-module belongs to $\mathrm{Cot}_n$.
    \end{enumerate}
\begin{proof}
The implication $(1)\Rightarrow(2)$ is immediate.

\noindent We now prove the implication $(2)\Rightarrow(3)$. 
Let $E$ be an injective $R$-module. Since $R^{+}$ is an injective cogenerator in the category of $R$-modules, there exists a set $\Lambda$ such that $E$ embeds into $(R^{+})^{\Lambda}$. 
Because $E$ is injective, this embedding splits, and hence $E$ is isomorphic to a direct summand of $(R^{+})^{\Lambda}$. 
In particular, $E$ is a pure submodule of $(R^{+})^{\Lambda}$, and therefore $E$ belongs to the definable closure of $R^{+}$. It follows that the definable closure of $R^{+}$ coincides with the definable closure of the class of all injective $R$-modules. 
By assumption~(2), $R^+$ is $n$-$\Sigma$-cotorsion. Consequently, by Theorem~\ref{theorem:Generalization_S-S}, the definable closure of the class of injective $R$-modules is contained in $\mathrm{Cot}_n$, which establishes~(3).

\noindent The implications $(3)\Rightarrow(4)$, $(4)\Rightarrow(5)$, and $(5)\Rightarrow(1)$ are straightforward. Thus, conditions (1)–(5) are equivalent. Finally, observe that $(3)\Rightarrow(6)$, since every FP-injective $R$-module belongs to the definable closure of injective $R$-modules, and $(6)\Rightarrow(1)$ is immediate. Therefore, all assertions are equivalent.
\end{proof}
\end{theorem}

\begin{definition} 
A ring $R$ is said to be \emph{left weakly $n$-$\Sigma$-cotorsion} if it satisfies any of the equivalent conditions of Theorem \ref{theorem:Caracterization_sums_of_injectives_are_in_Cot_n}. In case $n=0$, we say that $R$ is \emph{left weakly $\Sigma$-cotorsion}. We present several examples of this family of rings in Example~\ref{examples:weakly_Sigma-cotorsion}.
\end{definition}

\begin{remark} \label{remark:When_direcr_sum_injective_have_finite_Cot_dim}
We make the following remarks on Theorem~\ref{theorem:Caracterization_sums_of_injectives_are_in_Cot_n}.
\begin{enumerate}[label=(\arabic*)]
\item The module $R^{+}$ appearing in condition~(2) of Theorem~\ref{theorem:Caracterization_sums_of_injectives_are_in_Cot_n} may be replaced by any injective cogenerator of the category of $R$-modules. This follows from the fact that the definable closure of any injective cogenerator coincides with the definable closure of all injective $R$-modules, by the same argument used in the proof of (2) $\Rightarrow$ (3) in the previous theorem.

\item By dualizing \cite[Lemma~3.9]{cortes2016products} to the injective setting, one obtains that if $R$ is a ring for which every direct sum of injective $R$-modules has finite cotorsion dimension, then there exists an integer $n \geq 0$ such that the cotorsion dimension of every such direct sum is bounded above by $n$. Consequently, $R$ is a left weakly $n$-$\Sigma$-cotorsion ring.

\item Applying Lemma~\ref{lemma:nSigma_cotorsion_equiv} to the class of injective $R$-modules one obtain that a ring $R$ is left weakly $n$-$\Sigma$-cotorsion if and only if every direct sum of injective $R$-modules has finite cotorsion dimension at most $n$.
\end{enumerate}
\end{remark}

Specializing Theorem~\ref{theorem:Caracterization_sums_of_injectives_are_in_Cot_n} to the case $n=0$, we obtain the following result.

\begin{corollary}
\label{corollary:Caracterization_sums_of_injectives_are_cotorsion}
The following assertions are equivalent over any ring $R$:
    \begin{enumerate}[label=(\arabic*)]
        \item Every injective $R$-module is $\Sigma$-cotorsion.
        \item $R^+$, regarded as a left $R$-module, is $\Sigma$-cotorsion.
        \item The definable closure of injective modules is contained in $\mathrm{Cot}$.
        \item Any direct limit of FP-injective $R$-modules is cotorsion.
        \item Any direct limit of injective $R$-modules is cotorsion.
        \item Every FP-injective $R$-module belongs to $\mathrm{Cot}$.
    \end{enumerate}
\end{corollary}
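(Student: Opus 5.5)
This corollary is simply the case $n=0$ of Theorem~\ref{theorem:Caracterization_sums_of_injectives_are_in_Cot_n}, since $\mathrm{Cot}_0=\mathrm{Cot}$. By Definition~\ref{def:n-Sigma-cotorsion-module}, a $0$-$\Sigma$-cotorsion module is exactly a $\Sigma$-cotorsion module. So each of the six assertions here is literally the corresponding assertion (1)--(6) of that theorem with $n=0$, and the equivalence follows at once. If a self-contained argument is wanted, I would simply rerun the chain of implications from the theorem at $n=0$, as follows.

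First, (1)$\Rightarrow$(2) holds because $R^{+}$ is an injective left $R$-module. For (2)$\Rightarrow$(3), I use that $R^{+}$ is an injective cogenerator. Every injective module $E$ is then a direct summand, hence a pure submodule, of some power $(R^{+})^{\Lambda}$. It follows that $\langle R^{+}\rangle=\langle \mathrm{Inj}\rangle$. Now Theorem~\ref{theorem:Generalization_S-S} in the case $n=0$ applies; this case is \cite[Theorem~3.3]{sarich2020singular}. It gives $\langle R^{+}\rangle\subseteq \mathrm{Cot}$.

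Next, FP-injective modules are pure submodules of injective modules, so they lie in $\langle\mathrm{Inj}\rangle$. Since definable classes are closed under direct limits, this yields (3)$\Rightarrow$(4), (3)$\Rightarrow$(6), and also (4)$\Rightarrow$(5). Finally, (5)$\Rightarrow$(1) and (6)$\Rightarrow$(1) hold because a direct sum of copies of an injective module is a direct limit of finite direct sums, which are injective. It is also FP-injective, since FP-injective modules are closed under direct sums.

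There is no real obstacle here. All the substantive work sits in Theorem~\ref{theorem:Generalization_S-S} for $n=0$, which is the Šaroch--Šťovíček result and is taken as given.
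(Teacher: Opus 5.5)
Your proposal is correct and takes the same route as the paper, which states this corollary simply as the $n=0$ specialization of Theorem~\ref{theorem:Caracterization_sums_of_injectives_are_in_Cot_n}. Your rerun of the implications follows that theorem's proof step by step, with the substantive input being Theorem~\ref{theorem:Generalization_S-S} at $n=0$, i.e.\ \cite[Theorem~3.3]{sarich2020singular}. One cosmetic point: (4)$\Rightarrow$(5) does not use closure of definable classes under direct limits; it holds simply because injective modules are FP-injective.
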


We now describe some properties of left weakly $\Sigma$-cotorsion rings.

\begin{proposition}\label{proposition:Consequences_of_sum_of_injectives_is_cotorsion}Let $R$ be a left weakly $\Sigma$-cotorsion ring. Then the following assertions hold:
\begin{enumerate}[label=(\arabic*)]

    \item Every finitely generated flat $R$-module $F$ satisfies the following property: every ascending chain of submodules $I_{1} \subseteq I_{2} \subseteq I_{3} \subseteq \cdots$ whose union \( I := \bigcup_{n < \omega} I_{n} \) is a pure submodule of $F$ stabilizes.
    
    \item Every countably generated pure submodule of a finitely generated flat $R$-module is necessarily finitely generated.
    
    \item Every pure submodule of a finitely generated projective $R$-module is a direct summand. Consequently, every finitely generated flat $R$-module is projective.
    
    \item Every finitely generated projective $R$-module decomposes as a finite direct sum of indecomposable $R$-modules.
    
    \item $R$ has no infinite set of nonzero orthogonal idempotents; that is, $R$ is an $I$-finite ring (see \cite[p.~45]{nicholson2003quasi}).
    
    \item $R$ has the ACC on direct summands right ideals and also on direct summands left ideals.
    
    \item $R$ has the DCC on direct summands right ideals and also on direct summands left ideals.
    
\end{enumerate}

\begin{proof}
We begin with (1).  
Let $I_{1} \subseteq I_{2} \subseteq I_{3} \subseteq \cdots$ be an ascending chain of submodules of a finitely generated flat $R$-module $F$ whose union $I := \bigcup_{n < \omega} I_{n}$ is a pure submodule of $F$. Define a morphism
\[
f \colon I \longrightarrow \bigoplus_{n < \omega} \frac{F}{I_{n}}, 
\qquad 
f(x) = (x + I_{n})_{n < \omega}.
\]
Note that \( f \) is well defined, since for each \( x \in I \) the set  
\(
\{\, n < \omega \mid x \notin I_{n} \,\}
\)
is finite.

\noindent For each \( k < \omega \), let  
\(
c_{k} \colon \frac{F}{I_{k}} \longrightarrow C_{k}
\)
be a monomorphism, where $C_{k} \in \mathrm{Inj}$ (which always exists). This induces a morphism
\[
(\oplus c_{k}) \circ f \colon 
I \longrightarrow \bigoplus_{k < \omega} C_{k}.
\]

\noindent Since \( \bigoplus_{k < \omega} C_{k} \) is cotorsion (by assumption and Remark~\ref{remark:When_direcr_sum_injective_have_finite_Cot_dim}(3)), and \( I \) is pure in \( F \) (so that \( F/I \) is flat), it follows that
\[
\operatorname{Ext}^{1}_{R}\!\left(\frac{F}{I},\, \bigoplus_{k < \omega} C_{k}\right) = 0.
\]
Hence there exists a morphism
\(
h \colon F \longrightarrow \bigoplus_{k < \omega} C_{k}
\)
such that  
\(
h(y) = [(\oplus c_{k}) \circ f](y)
\)
for every \( y \in I \).

\noindent Let  \(P_{i} \colon \bigoplus_{k < \omega} C_{k} \longrightarrow C_{i}\) denote the canonical projection.  
Since \( F \) is finitely generated, write \( F = \langle x_{1},\dots,x_{n} \rangle \).  
Choose \( m < \omega \) such that  
\(
P_{m}(h(x_{i})) = 0 \text{ for all } i = 1,\dots,n.
\)
Then \( P_{m}(h(x)) = 0 \) for all \( x \in F \).

\noindent We claim that \( I_{k} = I_{m} \) for every \( k \geq m \).  
Indeed, let \( x \in I_{k} \). On the one hand, \( P_{m}(h(x)) = 0 \).  
On the other hand,
\[
P_{m}(h(x))
= P_{m}([(\oplus c_{k}) \circ f](x))
= P_{m}\!\left( (\oplus c_{k})(x + I_{n})_{n < \omega} \right)
= P_{m}\!\left( (c_{n}(x + I_{n}))_{n < \omega} \right)
= c_{m}(x + I_{m}).
\]
Thus \( c_{m}(x + I_{m}) = 0 \), and since \( c_{m} \) is a monomorphism, we conclude \( x \in I_{m} \).  
Hence the chain stabilises at \( m \), establishing (1).

\noindent We now prove (2).  
Let \( K \) be a countably generated pure submodule of a finitely generated flat $R$-module \( F \).  
Choose generators \( \{ x_{n} \mid n < \omega \} \) for \( K \), and define  
\(
K_{0} := 0,\,
K_{1} := Rx_{1},\,\dots,\,
K_{n} := \sum_{i=1}^{n} Rx_{i}.
\)
Then
\(
K_{0} \subseteq K_{1} \subseteq K_{2} \subseteq \cdots
\)
is an ascending chain with  
\(
K = \bigcup_{n < \omega} K_{n}.
\)
Applying (1), this chain must stabilise at some \( m < \omega \).  
Thus \( K = \sum_{i=1}^{m} Rx_{i} \), proving that \( K \) is finitely generated.

\noindent Now, we prove (3). We begin by showing that every countably generated pure submodule of a finitely generated projective $R$-module is a direct summand, and is therefore projective. Let \( M \) be a countably generated pure submodule of a finitely generated projective $R$-module \( P \).  
By (2), the $R$-module \( M \) is finitely generated.  
Consider the short exact sequence
\[
0 \longrightarrow M \longrightarrow P \longrightarrow P/M \longrightarrow 0.
\]
Since \( P \) is finitely generated projective and \( M \) is finitely generated, it follows that \( P/M \) is a finitely presented flat $R$-module.  
Hence \( P/M \) is projective by \cite[Proposition~3.2.12]{EnochsJenda2011}.  
Therefore the sequence splits, and so \( M \) is a direct summand of \( P \).  
In particular, \( M \) is projective.

\noindent Now, let \( K \) be a pure submodule of a finitely generated projective $R$-module \( P \). We claim that $K$ is countably generated. Once this is established, the desired conclusion follows from the previous case. Suppose that $K$ is not countably generated. Fix $x_{0} \in K$. Since $K$ is flat and Mittag-Leffler then by \cite[Corollary 3.20(d)]{GoebelTrlifaj2012} there exists a countably generated pure submodule $P_0$ of $K$ containing $x_{0}$. Since $P_{0} \subsetneq K$, there exist $x_{1} \in K-P_{0}$. Applying again that $K$ is flat and Mittag-Leffler, there exist a countably generated pure submodule $P_{1}$ of $K$ containing $P_{0}+Rx_{1}$. Proceeding in this way, we get an infinite ascending chain $P_{0} \subsetneq P_{1} \subsetneq \cdots$ of pure submodules of $P$ that does not terminate, which is a contradiction with (1).

\noindent
To prove (4), we simply combine \cite[Lemma~2.16]{MohamedMuller1990} and \cite[Theorem~2.17]{MohamedMuller1990}.

\noindent
Finally, to prove (5), (6), and (7), note that (1) implies (5) by \cite[Proposition~6.59]{Lam1999}, and (5), (6), and (7) are equivalent because the proof of \cite[Proposition~6.59]{Lam1999} works for both left and right ideals.
\end{proof}
\end{proposition}

In what follows, we use the results established so far to derive several relationships between the class of left weakly $n$-$\Sigma$-cotorsion rings and other well-known classes of rings.

\begin{corollary} \label{corollary:Relation_with_semiperfect_rings}
The following assertions hold for any ring $R$:
\begin{enumerate}[label=(\arabic*)]
  \item If $R$ is a left weakly $\Sigma$-cotorsion ring and cotorsion as a left $R$-module, then $R$ is semiperfect.
  \item If $R$ is a left weakly $\Sigma$-cotorsion ring and flat covers of finitely generated $R$-modules are finitely generated, then $R$ is left almost-perfect (see \cite[Definition~3.2]{amini2008rings}). In particular, $R$ is semiperfect.
\end{enumerate}
\begin{proof}
We first prove~(1). By Proposition~\ref{proposition:Consequences_of_sum_of_injectives_is_cotorsion}(5), the ring $R$ is $I$-finite. Since, by hypothesis, $R$ is cotorsion, it follows from \cite[Theorem~3.19]{mao2006cotorsion} that $R$ is semiperfect.

\noindent We now prove~(2). By Proposition~\ref{proposition:Consequences_of_sum_of_injectives_is_cotorsion}(3), every finitely generated flat $R$-module is projective. Hence, under the hypothesis that flat covers of finitely generated $R$-modules are finitely generated, it follows that such flat covers are projective. Therefore, $R$ is left almost-perfect by \cite[Theorem~3.7]{amini2008rings}.
\end{proof}
\end{corollary}

Recall that a ring $R$ is called a \emph{left $\mathrm{IF}$-ring} (see \cite{colby1975rings}) if every injective left $R$-module is flat. The following corollary shows that, for left $\mathrm{IF}$-rings, the class of left Noetherian rings coincides with the class of left weakly $\Sigma$-cotorsion rings.

\begin{corollary} \label{corollary:Relation_with_IF_rings}
Let $R$ be a left $\mathrm{IF}$-ring. Then the following assertions are equivalent:
\begin{enumerate}[label=(\arabic*)]
    \item $R$ is left Noetherian.
    \item $R$ is left weakly $\Sigma$-cotorsion.
\end{enumerate}
\begin{proof}
Clearly, (1) implies (2). We now prove that (2) implies (1).

\noindent Let $\{E_i \mid i \in I\}$ be a family of injective $R$-modules. Consider the pure-exact sequence
\[
0 \longrightarrow \bigoplus_{i \in I} E_i \longrightarrow \prod_{i \in I} E_i \longrightarrow T \longrightarrow 0.
\]

\noindent Since $R$ is a left $\mathrm{IF}$-ring, the product $\prod_{i \in I} E_i$ is flat. Because the sequence is pure, the $R$-module $T$ is also flat. By (2) and Remark~\ref{remark:When_direcr_sum_injective_have_finite_Cot_dim}, $\bigoplus_{i \in I} E_i$ belongs to $\mathrm{Cot}$, so $\mathrm{Ext}^1_R\big(T, \bigoplus_{i \in I} E_i\big) = 0$. Hence, the short exact sequence splits, and $\bigoplus_{i \in I} E_i$ is injective. Consequently, $R$ is left Noetherian.
\end{proof}
\end{corollary}

A ring $R$ is said to be \emph{left absolutely pure}, or \emph{left $\mathrm{FP}$-injective}, if $R$, viewed as a left $R$-module over itself, is $\mathrm{FP}$-injective. In Theorem~\ref{theorem:Generalization_Pedro_Ivo}, we characterized left $n$-$\Sigma$-cotorsion rings in terms of the cotorsion envelope (or the pure-injective envelope) of the ring being $n$-$\Sigma$-cotorsion.
It is therefore natural to ask what happens if the injective envelope of the ring is $n$-$\Sigma$-cotorsion.
With this motivation, we obtain the following result:

\begin{corollary} \label{corollary:Relation_with_FP-Rings}
Let $R$ be a left absolutely pure ring. Then the following assertions are equivalent:
\begin{enumerate}[label=(\arabic*)]
    \item $E(R)$, the injective envelope of $R$, is $n$-$\Sigma$-cotorsion for some $n \ge 0$.
    \item $R$ is left $m$-perfect for some $m \ge 0$.
    \item $R$ is left weakly $r$-$\Sigma$-cotorsion for some $r \ge 0$.
\end{enumerate}
Moreover, if any (and hence all) of the above conditions hold, then $n = m = r$.
\begin{proof}
\noindent (1)$\Rightarrow$(2). Since $R$ is $\mathrm{FP}$-injective as an $R$-module, it follows that $R$ is a pure submodule of $E(R)$. Consequently, $R \in \langle E(R) \rangle$. On the other hand, by (1) and Theorem~\ref{theorem:Generalization_S-S}, we obtain that $\langle E(R) \rangle \subseteq \mathrm{Cot}_n$. Since definable classes are closed under direct sums, it follows that $R$ is left $n$-$\Sigma$-cotorsion. Therefore, by Theorem~\ref{theorem:Generalization_Pedro_Ivo}, $R$ is left $n$-perfect.

\noindent The implications (2)$\Rightarrow$(3) and (3)$\Rightarrow$(1) are straightforward. Hence, all the conditions are equivalent.
\end{proof}
\end{corollary}

\begin{remark}\label{Remark:Relation_with_Von-Neumann-Rings}
    \begin{enumerate}[label=(\arabic*)]
        \item Let $R$ be a von Neumann regular ring. Then $R$ has finite left global dimension if and only if it is a left weakly $r$-$\Sigma$-cotorsion ring for some $r \ge 0$. Indeed, since every $R$-module over a von Neumann regular ring is $\mathrm{FP}$-injective, Corollary~\ref{corollary:Relation_with_FP-Rings} applies, taking into account that a von Neumann regular ring has finite left global dimension if and only if it is left $m$-perfect for some $m \geq 0$. Consequently, a von Neumann regular ring with infinite left global dimension cannot be a left weakly $n$-$\Sigma$-cotorsion ring for any $n \ge 0$. An explicit example is given in Example~\ref{example:Nagata}(2).

        \item Let $R$ be an indiscrete ring (see \cite{prest1995absolutely}). By \cite[Proposition 2.3]{prest1995absolutely}, $R$ is both a left and right $\mathrm{FP}$-injective, so Corollary~\ref{corollary:Relation_with_FP-Rings} applies to this class of rings. Notice that there are indiscrete rings that are not von Neumann regular (see \cite[Section 2.2]{prest1995absolutely}).
    \end{enumerate}
\end{remark}

At this point, we present several examples of left weakly $n$-$\Sigma$ cotorsion rings.

\begin{example}\label{examples:weakly_Sigma-cotorsion}
    \begin{enumerate}[label=(\arabic*)] 
    \item All left Noetherian rings and all left $n$-perfect rings are left weakly $n$-$\Sigma$-cotorsion rings. This follows immediately from the definition. Note that if we consider the product of a ring that is not left Noetherian but is left $n$-perfect with a ring that is not left $n$-perfect but is left Noetherian, then we obtain a ring that is neither left Noetherian nor left $n$-perfect, yet still left weakly $n$-$\Sigma$-cotorsion ring. An example illustrating this situation for $n=0$, is the ring $T := \mathbb{Z} \times R^{op}$, where $R$ is the ring constructed in~\cite[Example~1]{estrada2017gorenstein}, $\mathbb{Z}$ denotes the ring of integers, and $R^{op}$ denotes the opposite ring of $R$.

    \item Let $R$ be any left Noetherian and not left perfect ring, and $Q$ an (infinite) rooted quiver (in the sense of \cite[Definition 1]{Oyonarte2001CotorsionQuivers}), such that the path ring $RQ$ is not left Noetherian. For instance, by \cite[Lemma 3.4 and Theorem 3.6]{EnochsEtAl2002_NoetherianQuivers}, $Q$ can be the infinite binary tree quiver:
    \begin{center}
      \begin{tikzpicture}[
         node distance = 1cm,
    dot/.style = {circle, fill=blue!80!black, inner sep=2pt}]

    \node[dot] (root) at (0,0) {};

    \node[dot] (n1) at (2, 1) {};
    \node[dot] (n2) at (2, -1) {};

    \node[dot] (n11) at (4, 1.5) {};
    \node[dot] (n12) at (4, 0.5) {};
    \node[dot] (n21) at (4, -0.5) {};
    \node[dot] (n22) at (4, -1.5) {};

\draw[->, thick] (root) -- (n1);
\draw[->, thick] (root) -- (n2);

\draw[->, thick] (n1) -- (n11);
\draw[->, thick] (n1) -- (n12);
\draw[->, thick] (n2) -- (n21);
\draw[->, thick] (n2) -- (n22);

\draw[dashed, shorten >=5pt] (n11) -- (4.5, 1.7);
\draw[dashed, shorten >=5pt] (n11) -- (4.5, 1.3);

\draw[dashed, shorten >=5pt] (n12) -- (4.5, 0.7);
\draw[dashed, shorten >=5pt] (n12) -- (4.5, 0.3);

\draw[dashed, shorten >=5pt] (n21) -- (4.5, -0.3);
\draw[dashed, shorten >=5pt] (n21) -- (4.5, -0.7);

\draw[dashed, shorten >=5pt] (n22) -- (4.5, -1.3);
\draw[dashed, shorten >=5pt] (n22) -- (4.5, -1.7);

\end{tikzpicture}
    \end{center}

\noindent Then, there is a family of injective left $RQ$-modules, $\{E_i|\ i\in I\}$, such that $\bigoplus_{i\in I}E_i$ is not injective. However, as $R$ is left Noetherian, by \cite[Theorem 6]{Oyonarte2001CotorsionQuivers}, every direct sum of injective left $RQ$-modules is cotorsion. Therefore $RQ$ is a left weakly $\Sigma$-cotorsion ring by Remark~\ref{remark:When_direcr_sum_injective_have_finite_Cot_dim}(3).

\item In \cite{Kaplansky1958_Hereditary}, Kaplansky constructed a ring $R$ that is von Neumann regular, not left Noetherian, and has left global dimension exactly~$2$. Consequently, the class of injective left $R$-modules coincides with the class of cotorsion $R$-modules. Since $R$ is not left Noetherian, there exists a family of injective left $R$-modules whose direct sum fails to be injective and, therefore, is not cotorsion in this context, then $R$ is not a left weakly $\Sigma$-cotorsion ring by Remark~\ref{remark:When_direcr_sum_injective_have_finite_Cot_dim}(3). Nevertheless, every left $R$-module has cotorsion dimension at most~$2$. Thus, $R$ is a left weakly $2$-$\Sigma$-cotorsion ring, but it is not a left weakly $\Sigma$-cotorsion ring.
\end{enumerate}
\end{example}

We now prove a result connecting left $n$-perfect and left weakly $n$-$\Sigma$-cotorsion rings. 
Note that, when $n=0$, the statement below recovers \cite[Lemma~5.1(1)]{EstradaPerezZhu2020}, so the result can be regarded as a generalization of that lemma. Here, by a balanced pair we mean the same concept as in \cite[Introduction]{EstradaPerezZhu2020}; see also \cite[Section~8.2]{EnochsJenda2011}.

\begin{corollary} \label{corollary:Generalization_lemma_5.1(1)}
Let $n \geq 0$ be an integer. For any ring $R$, the following statements are equivalent:
\begin{enumerate}[label=(\arabic*)]
\item $R$ is left weakly $n$-$\Sigma$-cotorsion, and the class $^{\perp}\!\mathrm{Cot}_n$ forms the left part of a balanced pair.
\item $R$ is a left $n$-perfect ring (equivalently, $R$ is a left $n$-$\Sigma$-cotorsion ring by Theorem~\ref{theorem:Generalization_Pedro_Ivo}).
\end{enumerate}
\begin{proof}
For each integer $n \geq 0$, the pair $\bigl(^{\perp}\!\mathrm{Cot}_n, \mathrm{Cot}_n\bigr)$ is a cotorsion pair by \cite[Lemma 1.13]{Christensen2023OneSidedGorenstein}. Hence, the implication $(1)\Rightarrow(2)$ is proved by the same argument as in \cite[Lemma~5.1(1)]{EstradaPerezZhu2020}, by using Theorem \ref{theorem:Generalization_Pedro_Ivo}.

\noindent We now establish $(2)\Rightarrow(1)$. 
Since $R$ is left $n$-perfect, every $R$-module has cotorsion dimension at most $n$. Therefore, the class $^{\perp}\!\mathrm{Cot}_n$ coincides with the class of all projective $R$-modules and thus forms the left component of a balanced pair, whose right component is given by the class of all injective $R$-modules.
\end{proof}
\end{corollary}

Finally, we provide examples illustrating that the class of left $n$-perfect rings is strictly contained within the class of left weakly $n$-$\Sigma$-cotorsion rings. We also demonstrate that there are rings which are not left weakly $n$-$\Sigma$-cotorsion, for any $n\geq 0$. 

\begin{example}\label{example:Nagata}
\begin{enumerate}[label=(\arabic*)]
\item In a private communication, Keri Ann Sather-Wagstaff and Jonathan P. Totushek explained how to modify Nagata's example (see \cite[Appendix A1, Example 1]{Nagata1962_LocalRings}) to construct a commutative Noetherian domain $R$ whose field of fractions has infinite projective dimension. Consequently, $R$ is not $n$-perfect for any $n \geq 0$. Nevertheless, since $R$ is Noetherian, every direct sum of injective $R$-modules is cotorsion, and therefore $R$ is left weakly $n$-$\Sigma$-cotorsion, for all $n\geq 0$ by Remark~\ref{remark:When_direcr_sum_injective_have_finite_Cot_dim}(3).

\item Let $k$ be a field, and let $R$ be the direct product of $\aleph_{\omega}$ copies of $k$. By construction, $R$ is a von Neumann regular ring. Moreover, by \cite[Conclusions~3.1]{Osofsky1970_Homological}, the ring $R$ has infinite global dimension. Consequently, $R$ is not left $n$-$\Sigma$-cotorsion ring for any $n \geq 0$ by Remark~\ref{Remark:Relation_with_Von-Neumann-Rings}(1).

\noindent Since every von Neumann regular ring is left coherent, the ring $R$ provides an example of a left coherent ring that is not left weakly $n$-$\Sigma$-cotorsion for any $n \geq 0$. Furthermore, let $T := S^{\mathrm{op}} \times R$, where $S$ is the ring constructed in~\cite[Example~1]{estrada2017gorenstein}. Then $T$ is left weak coherent (in the sense of \cite{cortes2016products}), but it is neither left coherent nor left weakly $n$-$\Sigma$-cotorsion for any $n \geq 0$.
\end{enumerate}
\end{example}

\subsection{Weakly \texorpdfstring{$n\text{-}\Sigma$}{n-Sigma}-cotorsion triangular matrix rings}

In this part, we determine when triangular matrix rings are left weakly $n$-$\Sigma$-cotorsion for some $n \ge 0$. We will adopt the notation established in \cite{Mao2020CotorsionPairs}. Let $A$ and $B$ be rings, and let $U$ be a $(B,A)$-bimodule. Consider the formal lower triangular matrix ring $T =\begin{pmatrix}A & 0 \\U & B\end{pmatrix}$. Recall that every $T$-module can be represented as a pair $M =\begin{pmatrix}M_1 \\M_2\end{pmatrix}_{\varphi^{M}}$, where $M_1$ is an $A$-module, $M_2$ is a $B$-module, and $\varphi^{M} \colon U \otimes_{A} M_1 \longrightarrow M_2$ is a $B$-homomorphism. Analogously, every right $T$-module $N$ can be represented as a pair $N =\begin{pmatrix}N_1 \\N_2\end{pmatrix}_{\tau^{N}}$, where $N_1$ is a right $A$-module, $N_2$ is a right $B$-module, and $\tau^{N} \colon N_2 \otimes_{B} U \longrightarrow N_1$ is an $A$-homomorphism. Injective, projective, and flat $T$-modules are completely characterized; see, for instance, \cite[Lemma~3.1]{Mao2020CotorsionPairs}. Cotorsion $T$-modules are also characterized in \cite[Corollary~4.3]{Mao2020CotorsionPairs}. 

\begin{remark} \label{remark:caracterization_Cot_n_in_triangular_matrix_rings}
Let $n\geq0$ be an integer. It is straightforward to verify, using \cite[Proposition~19.2.1]{MaoDing2006CotorsionDimension} and \cite[Lemma~3.2]{Mao2020CotorsionPairs}, that a $T$-module $M = \begin{pmatrix} M_1 \\ M_2 \end{pmatrix}_{\varphi^{M}}$ has cotorsion dimension at most $n$ if and only if the $A$-module $M_1$ and the $B$-module $M_2$ each have cotorsion dimension at most $n$. In particular, $M$ is $n$-$\Sigma$-cotorsion if and only if both $M_1$ and $M_2$ are $n$-$\Sigma$-cotorsion and the same conclusions hold for right modules as well.
\end{remark}

\begin{proposition} \label{proposition:Caracterization_Sums_of_Inj_is_in_Cot_n_for_triangular_matrix_rings}
Let $n \geq 0$ be an integer. The following statements are equivalent:
\begin{enumerate}[label=(\arabic*)]
    \item $T$ is a left weakly $n$-$\Sigma$-cotorsion ring.
    \item $A$ and $B$ are left weakly $n$-$\Sigma$-cotorsion rings and the $A$-module $U^{+}$ is $n$-$\Sigma$-cotorsion.
\end{enumerate}
\begin{proof}
The regular module $T_T$ can be described as the pair $\begin{pmatrix}
A \oplus U \\
B
\end{pmatrix}_{\tau}$, where $\tau \colon B \otimes_{B} U \longrightarrow A \oplus U$ is the $A$-homomorphism defined by $\tau(b \otimes u) = (0, bu)$, for all $b \in B$ and $u \in U$. Consequently, the $T$-module $T^{+}$ is represented by the pair $\begin{pmatrix}
A^{+} \oplus U^{+} \\
B^{+}
\end{pmatrix}_{\tau^{+}}$, where $\tau^{+} \colon U \otimes_{A} (A^{+} \oplus U^{+}) \longrightarrow B^{+}$ is given by $\tau^{+}(u \otimes m^{+})(b) := m^{+}(\tau(b \otimes u))$, for all $u \in U$, $m^{+} \in A^{+} \oplus U^{+}$, and $b \in B$. Therefore, applying Remark~\ref{remark:caracterization_Cot_n_in_triangular_matrix_rings} and Theorem~\ref{theorem:Caracterization_sums_of_injectives_are_in_Cot_n} yields the desired result.
\end{proof}
\end{proposition}

\begin{remark} \label{remark:triangular_matrix_rings}
Let $n \geq 0$ be an integer. We now describe several situations in which the $A$-module $U^{+}$ is $n$-$\Sigma$-cotorsion, under the assumption that the rings $A$ and $B$ are left weakly $n$-$\Sigma$-cotorsion. First, observe that for every injective $B$-module $E$, the $A$-module $\operatorname{Hom}_{B}(U, E)$ is cotorsion. Moreover, it is easy to verify that there exists an $A$-module isomorphism $U^{+} \cong \operatorname{Hom}_{B}(U, B^{+})$, where $B^{+}$ is regarded as a left $B$-module. Consequently, to ensure that $U^{+}$ is $n$-$\Sigma$-cotorsion, it suffices (under the assumption that both $A$ and $B$ are left weakly $n$-$\Sigma$-cotorsion rings) to verify one of the following additional hypotheses. Under any of these conditions, the ring $T$ is also left weakly $n$-$\Sigma$-cotorsion by 
Proposition~\ref{proposition:Caracterization_Sums_of_Inj_is_in_Cot_n_for_triangular_matrix_rings}.

\begin{enumerate}[label=(\arabic*)]
    \item \textbf{$A$ is left $n$-perfect.}  
    In this case every $A$-module has finite cotorsion dimension at most $n$. Consequently, the $A$-module $U^{+}$ is $n$-$\Sigma$-cotorsion.

    \item \textbf{$U$ is flat as a right $A$-module.}  
    Then $U^+$ is an injective $A$-module. Since $A$ is a left weakly $n$-$\Sigma$-cotorsion, then $U^+$ is $n$-$\Sigma$-cotorsion.

    \item \textbf{$B$ is left Noetherian and $U$ is finitely generated as a $B$-module.} The finite generation of $U$ yields, for any set $I$, an isomorphism $\operatorname{Hom}_{B}(U, B^{+})^{(I)} \cong \operatorname{Hom}_{B}\!\left(U, (B^{+})^{(I)}\right)$. Since $B$ is left Noetherian, the $B$-module $(B^{+})^{(I)}$ is injective, and consequently $\operatorname{Hom}_{B}\!\left(U, (B^{+})^{(I)}\right)$ is cotorsion. Then, $U^+$ is $r$-$\Sigma$-cotorsion for all integer $r\geq0$.
\end{enumerate}
\end{remark}

\begin{example} \label{examples_with_matrix_rings}
We now provide, in the case $n=0$, an example of a ring satisfying each of the three conditions of Remark~\ref{remark:triangular_matrix_rings} while failing the other two. It is clear that each of the three rings presented below is left weakly $\Sigma$-cotorsion.
\begin{enumerate}[label=(\arabic*)]
    \item Consider the ring $\begin{pmatrix} R & 0 \\ R^{\Lambda} & R \end{pmatrix}$, where $R$ is the ring described in~\cite[Example~1]{estrada2017gorenstein}. Since $R$ is left perfect, condition~(1) is satisfied. However, because $R$ is not right coherent, Chase's theorem \cite[Theorem 2.1]{Chase1960} guarantees the existence of a set $\Lambda$ such that the $R$-module $R^{\Lambda}$ is not flat; hence, condition~(2) fails. Moreover, since $\Lambda$ must be infinite, condition~(3) is also not satisfied.
    \item Consider the ring $\begin{pmatrix} \mathbb{Z} & 0 \\ \mathbb{Q} & \mathbb{Z} \end{pmatrix}$. This ring clearly satisfies condition~(2), since $\mathbb{Q}$ is a flat $\mathbb{Z}$-module. However, it does not satisfy condition~(1), as $\mathbb{Z}$ is not left perfect, nor condition~(3), because $\mathbb{Q}$ is not finitely generated as a $\mathbb{Z}$-module. Note that this ring is neither left Noetherian nor left perfect.
    \item Let $m>1$ be an integer, and consider the ring $\begin{pmatrix} \mathbb{Z} & 0 \\ \mathbb{Z}_m & \mathbb{Z}_m \end{pmatrix}$. Here, $\mathbb{Z}_m$ denotes the ring of integers modulo $m$. This ring clearly satisfies condition~(3). However, it does not satisfy condition~(1), since $\mathbb{Z}$ is not left perfect, nor condition~(2), because $\mathbb{Z}_m$ is not a flat right $\mathbb{Z}$-module.
\end{enumerate}
\end{example}
\section*{Acknowledgements} We wish to thank Lars Winther Christensen for sharing with us the notes of Sather-Wagstaff and Totushek, which lead to Example~\ref{example:Nagata}(1). The authors also thank Pedro Antonio Guil Asensio for helpful correspondence during the preparation of this manuscript.
\printbibliography[heading=bibintoc]
\end{document}